\theoremstyle{plain}
\newtheorem*{thm*}{Theorem}
\newtheorem{thm}{Theorem}[section]
\newtheorem{lemma}[thm]{Lemma}
\newtheorem*{lemma*}{Lemma}
\newtheorem*{corollary*}{Corollary}
\newtheorem*{prop*}{Proposition}
\newtheorem{defn}{Definition}
\newtheorem*{conjecture*}{Conjecture}
\newcommand{\N}{\mathbb{N}}
\newcommand{\RR}{\mathbb{R}}
\newcommand{\hhf}{{\scriptstyle{{\frac{1}{2}}}}}
\newcommand{\hof}{{\scriptstyle{{\frac{1}{4}}}}}
\newcommand{\hot}{{\scriptstyle{{\frac{1}{10}}}}}
\title{Connectivity of Random Geometric Hypergraphs}
\author{
  Henry-Louis de Kergorlay%
    \thanks{%
    School of Mathematics and Maxwell Institute for Mathematical Sciences, The University of Edinburgh, EH9 3FD
          }
 \and
        Desmond John Higham%
            \thanks{%
              School of Mathematics and Maxwell Institute for Mathematical Sciences, The University of Edinburgh, EH9 3FD
        }
      }
\begin{document}

\maketitle

\abstract{We consider a random geometric hypergraph model based on an underlying bipartite graph.
Nodes and hyperedges are sampled uniformly in a domain, and a node is assigned to those hyperedges that lie 
with a certain radius. 
From a modelling perspective, we explain how the model captures higher order connections that arise in 
real data sets.
Our main contribution is to study the connectivity properties of the model. In an asymptotic limit where the 
number of nodes and hyperedges grow in tandem we give a condition on the radius that guarantees connectivity.
}

\section{Motivation}\label{set:mot}

There is growing interest in the development of models and algorithms that capture group-level interactions
\cite{benson2016higher,bianconi2021higher,torres2020why}. For example, multiple co-authors may be involved in a collaboration, multiple workers may share an office space, and 
multiple proteins may contribute in a cellular process. In such cases representing the connectivity via a network of pairwise
interactions is an obvious, and often avoidable, simplification. Hypergraphs, where any number of nodes may be grouped together to form a hyperedge, form a natural generalization. 
Hypergraph-based techniques have been developed for 
\begin{itemize}
    \item studying the propagation of disease or information \cite{ABAMPL21,BCILLPYP21,SISonHypergraphs,higham2021epidemics,HdK22,heterogeneityHypergraph},
    \item investigating the importance or structural roles of individual components  
\cite{ER06,tudisco2022core,Tudisco2021node},
   \item discovering and quantifying clusters \cite{Chodrow21,Scholkopf2007learn,10.1214/16-AOS1453},
\item predicting future connections \cite{yoon2020much,yadati2020nhp},
\item inferring connectivity structure from time series data \cite{order23}.
\end{itemize}

Just as in the pairwise setting, it is also of interest to consider processes that create hypergraphs \cite{Bart22,Gong23}.
Comparing generative hypergraph models against real data sets may help us to understand the mechanisms 
through which interactions arise. Furthermore, realistic models can be used to produce synthetic data sets 
on which to base simulations, and also to form null models for studying features of interest.

Models that use a geometric construction, with connectivity between elements determined by distance, have proved useful in many settings.
Random geometric graphs were first introduced in \cite{Gil61} to model communication between radio
stations, although the author also mentioned their relevance to the spread of disease. 
These models have subsequently proved useful in many application areas, ranging from studies of the 
proteome 
\cite{GrindrodPeter2002Rrga,sticky2000,Fit08}
to 
academic citations \cite{cit16}.
In many settings, the notion of distance may relate to an embedding of nodes into a latent space that captures 
key features. Here, similarity is interpreted in an indirect or abstract sense.
Random geometric graphs have also been studied theoretically, with many interesting results arising from the 
perspectives of analysis, probability and 
statistical physics
\cite{OB15,Pen03,rggsurvey23,AR02,P16,Dett18}

Our aim in this work is motivate and analyse a random geometric hypergraph model.
In a similar manner to \cite{Bart22}, we make use of the connection between 
hypergraphs and bipartite graphs. The model is introduced and motivated in section~\ref{sec:model}, where we 
also show the results of illustrative computational experiments concerning connectivity. 
Our main contribution is to derive a 
condition on the thresholding radius that asymptotically guarantees connectivity of the hypergraph.
The result is stated and proved in section~\ref{sec:conn}.
Directions for future work are described in section~\ref{sec:disc}.

\section{The Random Geometric Model and Its Connectivity}\label{sec:model}

In this section we  motivate and informally describe a geometric random hypergraph model, and computationally investigate its connectivity.
We make use of a well known equivalence between hypergraphs and bipartite graphs \cite{Bart22,DGKRS21}.
Suppose we are given an undirected bipartite graph, where nodes have been separated into two groups, A and B.
By construction any edge must join one node in group A with one node in group B.
We may form a hypergraph on the nodes in group A with the following rule:
\begin{itemize}
    \item nodes in group A appear in the same hyperedge if and only if, in the underlying bipartite graph, they both have an edge to the same node in group B.
\end{itemize}
In this way the nodes in set B may be viewed as hyperedge ``centres.'' Two nodes from group A that are attracted to the same centre 
are allocated to the same hyperedge.
In many graph settings there is a natural concept of distance between nodes. For example, in social networks, geographical distance 
between places of work or residence may play a strong role in determining connectivity. More generally, there may be a 
more nuanced set of features (hobbies, tastes in music, pet ownership, \ldots) that help to explain 
whether pairwise relationships arise.
This argument extends readily to the bipartite/hypergraph scenario.
Hyperedge centres may correspond, for example, to 
shops, office buildings, gyms, train stations, restaurants, concert venues, churches,\ldots, 
with an individual joining a hyperedge if they are sufficiently close to that centre; 
for example, exercising at a local gym.
In the absence of specific information, it is natural to assume that 
the features possessed by a node arise at random, so that a node is randomly embedded in 
$\RR^d$ for some dimension $d$. 
In a similar way, we may 
simultaneously embed our hyperedge centres in 
$\RR^d$, and 
assign a node to a hyperedge if and only if it is within some threshold distance of the centre.

Figure~\ref{fig:rgh} illustrates the idea in the two dimensional case. We have a bipartite graph with two types of nodes. Groups A and B are 
represented by circles and stars, respectively.  We form a hypergraph by 
placing a circle node in a hyperedge if and only if 
it is within a certain distance of the corresponding star.
Colours in the figure distinguish between the different hyperedges. 
We emphasize that mathematically the resulting hypergraph consists only of the list of hypergraph nodes and
hyperedges. Information about the existence/number of hyperedge centres and the locations of all nodes in $\RR^2$ is lost.

\begin{figure}
\includegraphics[width=8 cm]{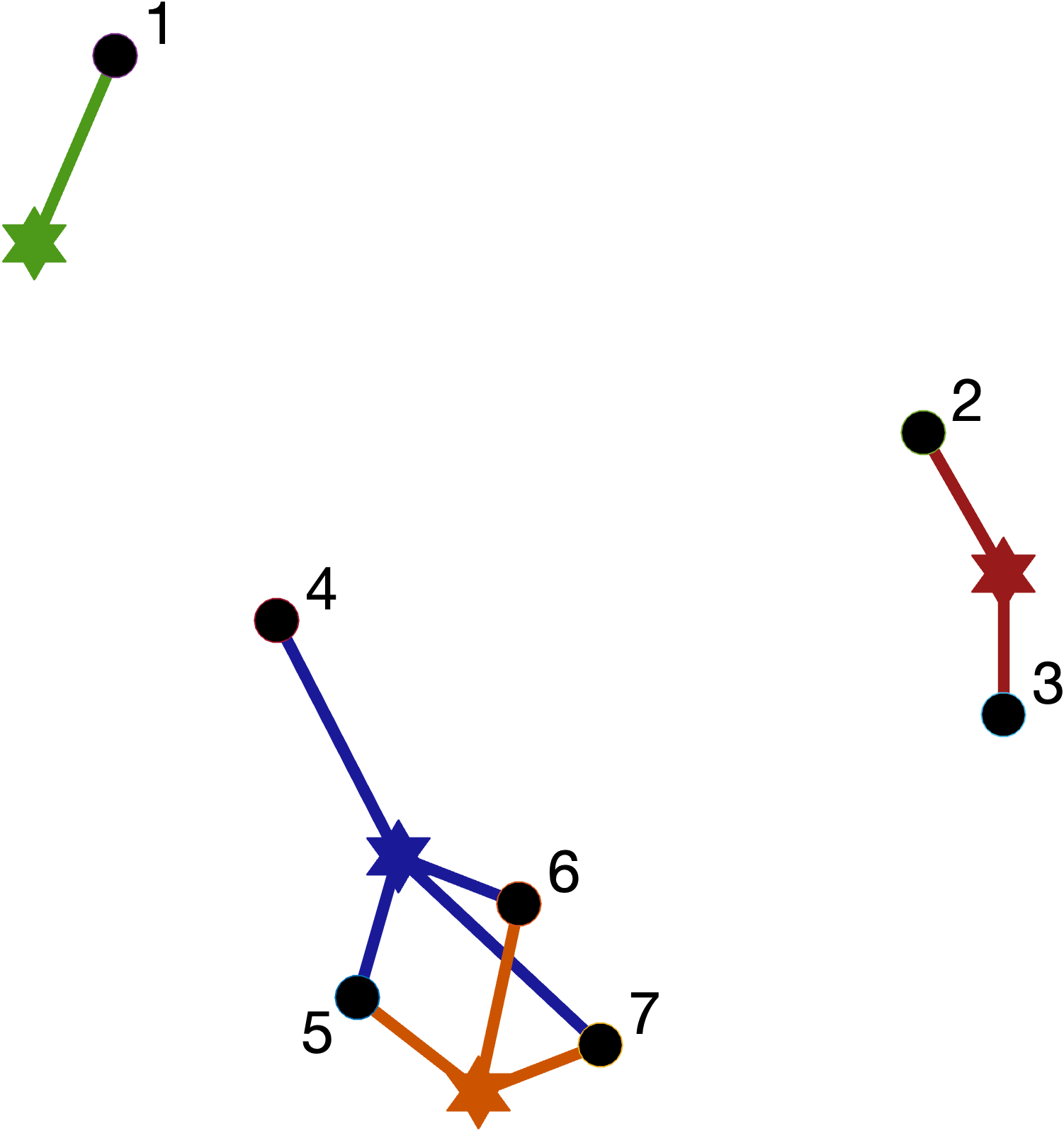}
\caption{When this construction is regarded as a bipartite graph, the solid circles and solid stars represent two types of node.
Edges are created only between nodes of a different type; this happens if any only if they are close enough in Euclidean distance.
When regarded as a hypergraph, the solid circles represent nodes and the solid stars represent ``centres'' of hyperedges.
A node is a member of a hyperedge if and only if it is sufficiently close to the corresponding centre.
Mathematically, 
the resulting hypergraph may defined by labelling the nodes $\{1,2,3,4,5,6,7\}$ and 
listing the hyperedges as 
$\{1\}$,
$\{2,3\}$,
$\{4,5,6,7\}$ and
$\{5,6,7\}$.
\label{fig:rgh}}
\end{figure}

Our aim in this work is to study connectivity: a basic property that is of practical importance in many areas, including 
disease propagation, communication and percolation.
We consider the random geometric hypergraph to be connected if the underlying random geometric bipartite graph is connected.
We focus on the smallest distance threshold that produces a connected network and study an asymptotic limit  
where the number of nodes tends to infinity.

We motivate our analytical results with computational experiments.
To produce Figure~\ref{fig:d2}, we 
formed random geometric bipartite graphs based on $n$ points embedded in $\RR^2$. 
For each graph, the points
had components chosen uniformly and independently in $(0,1)$.
We separated these points into two groups of size $n_1 = 0.8 n$ and $n_2 = 0.2 n$.
We then used a bisection algorithm to compute the smallest radius $r$ that produced
a connected bipartite graph.
In other words, we found the smallest $r$  
such that a connected graph arose when we 
created edges between pairs of 
nodes from different groups that were separated by Euclidean distance less than $r$.
(Equivalently, we
assigned $n_1 = 0.8n$ points to the role of nodes in a random geometric hypergraph 
and $n_2 = 0.2n$ points to the role of hyperedge centres, and computed the smallest 
node-hyperedge centre radius that gave connectivity.)
We ran the experiment for a range of $n$ values between
$10^3$ and $10^4$. For each choice of $n$ we repeated the computed for 
$500$ independent random node embeddings.
Figure~\ref{fig:d2} shows the mean, maximum, and minimum radius arising for each $n$.
Note that the axes are scaled logarithmically.
We have superimposed a reference line of the form $C n^{-\hhf}$, which is seen to be 
consistent with the behaviour of the radius.

\begin{figure}
\includegraphics[width=8 cm]{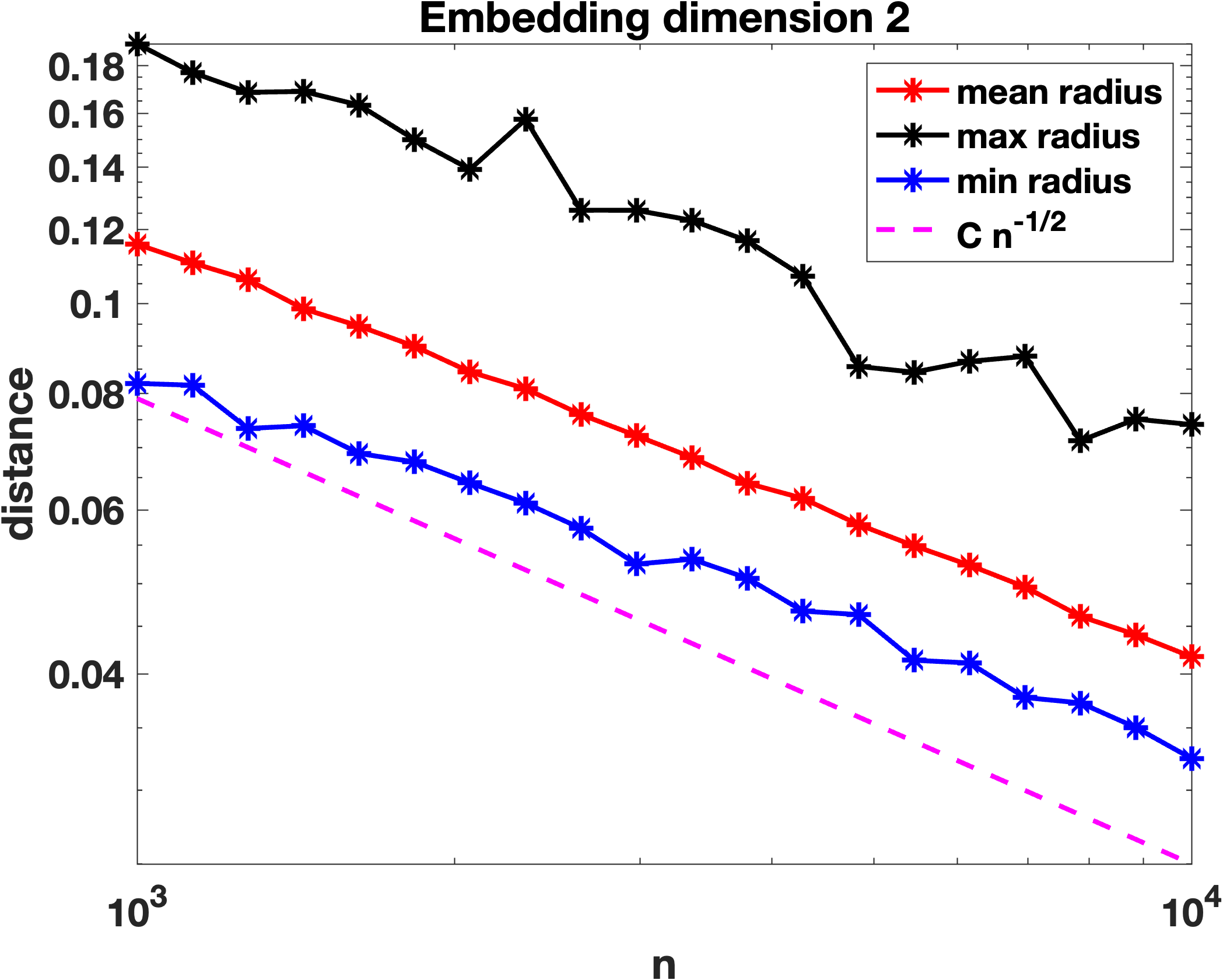}
\caption{Euclidean distance at which geometric random hypergraph becomes connected.
Here we have $0.8n$ nodes and $0.2n$ hyperedge centres in $\RR^2$, for values of $n$
between $10^3$ and $10^4$. 
 The plots show the mean, maximum and minimum value of this distance over 
$500$ independent trials.
A reference slope corresponding to $C n^{-\hhf}$ is shown. Axes are logarithmically scaled.
\label{fig:d2}}
\end{figure}  

Figures~\ref{fig:4d} and \ref{fig:10d} repeat these computations with points embedded into 
 $\RR^{4}$ and $\RR^{10}$, respectively. We see that the behaviour remains remains consistent with
 a decay roughly proportional to, and perhaps slightly slower than, $n^{-1/d}$ for dimension $d$. 

 In the next section we formalize our definition of a random geometric hypergraph and establish an 
 upper bound on the radius decay rate for connectivity that agrees with $n^{-1/d}$, up to
 $\log$-dependent factors (which of course would be extremely difficult to pin down in computational experiments).
 We also note for comparison that a threshold of the form $( \log(n)/n)^{1/d}$ has previously arisen in the study of 
 random geometric graphs,
 \cite{AR02,Pen97}.

\begin{figure}
\includegraphics[width=8 cm]{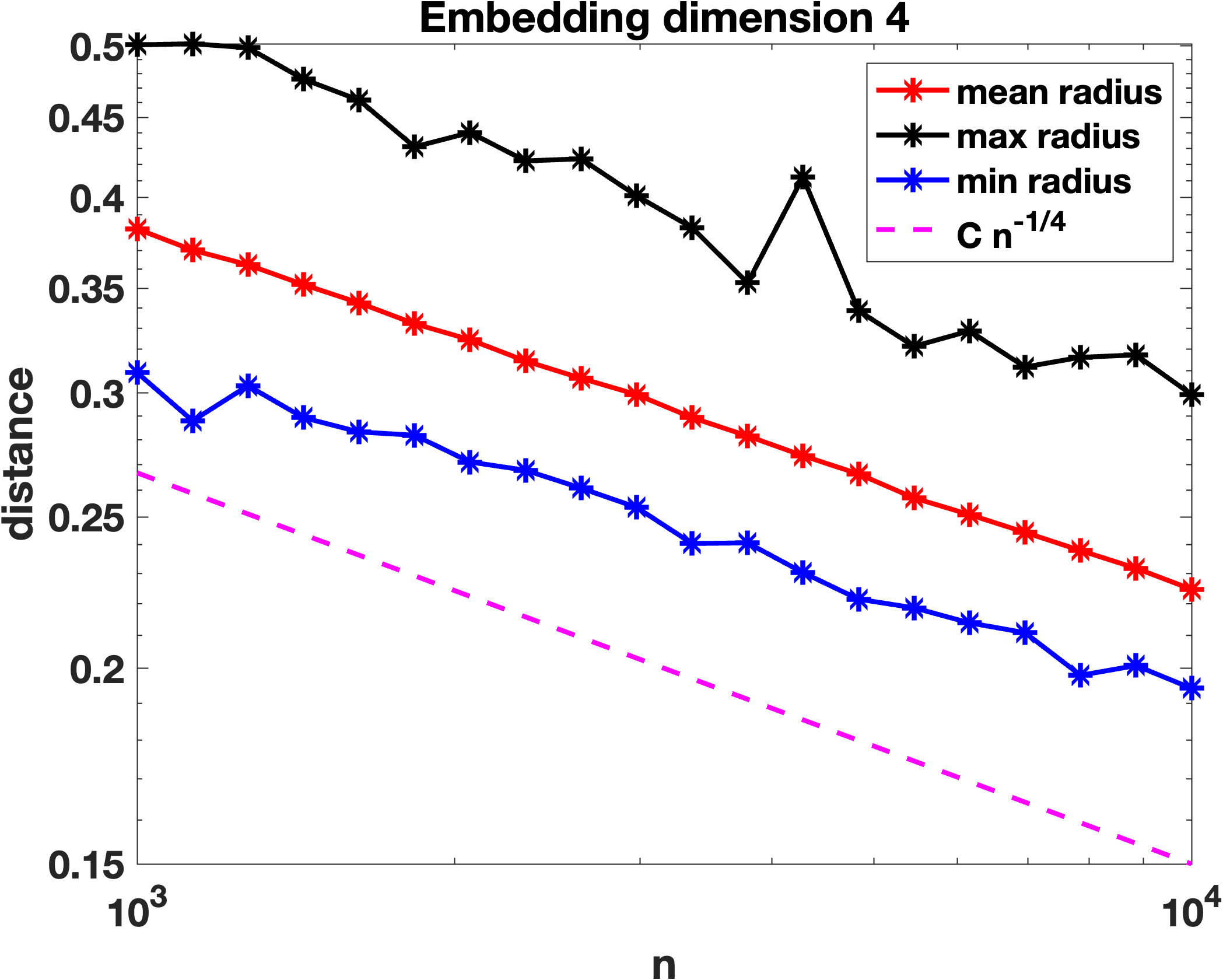}
\caption{
As for Figure~\ref{fig:d2} with nodes embedded in $\RR^4$ and a reference slope corresponding to $C n^{-\hof}$.
\label{fig:4d}}
\end{figure}

\begin{figure}
\includegraphics[width=8 cm]{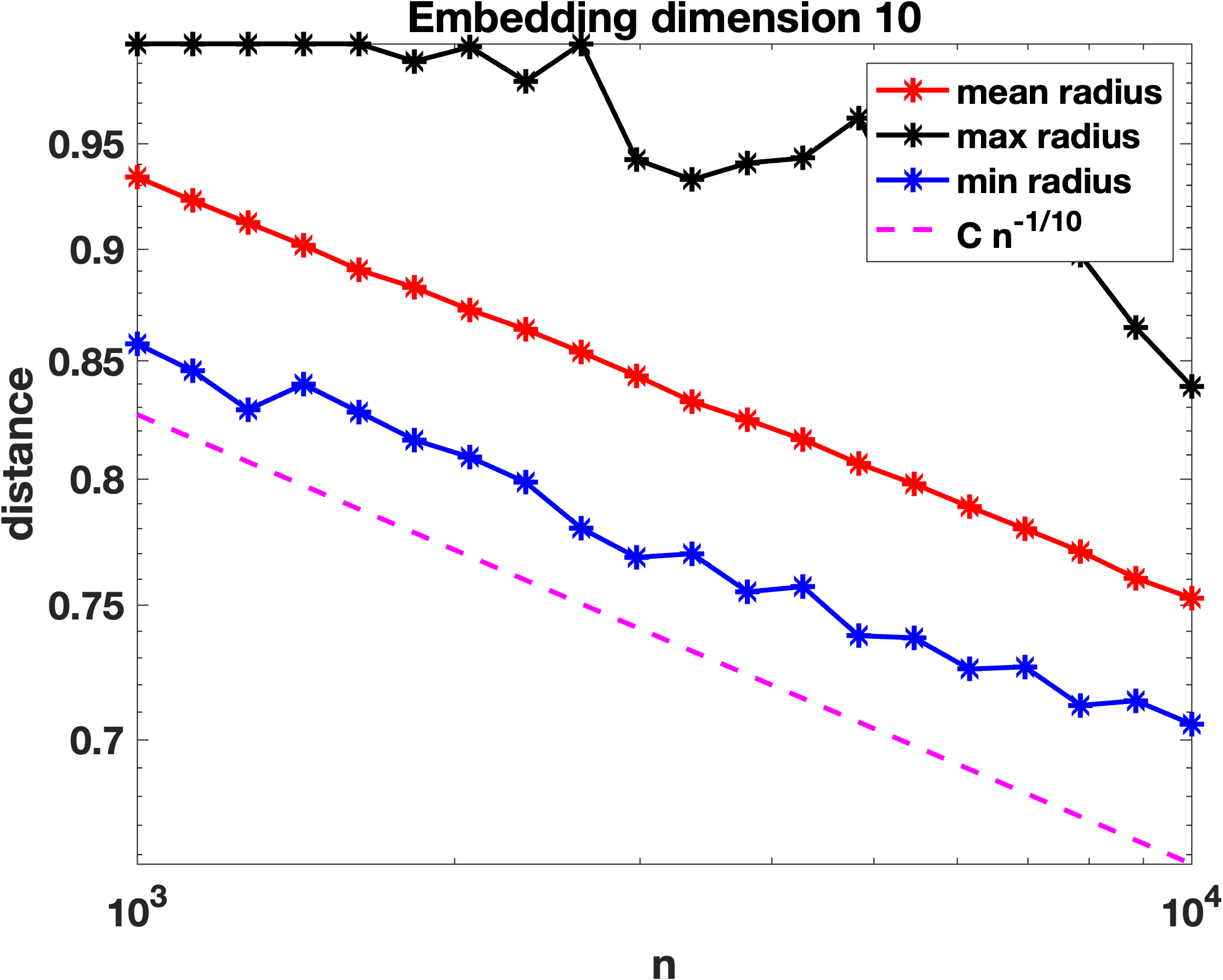}
\caption{
As for Figure~\ref{fig:d2} with nodes embedded in $\RR^{10}$ and a reference slope corresponding to $C n^{-\hot}$.
\label{fig:10d}}
\end{figure}

In related work, we note that Barthelemy \cite{Bart22} proposed and studied a wide class of random hypergraph models, including examples
where nodes are embedded in space and connections arise via a distance measure.
That approach
 to defining a random geometric hypergraph differs from ours 
 by assuming that the number of hyperedges is given and 
 by considering a process where new nodes are added to the network, with 
 new connections arising based in the current hyperedge memberships (\cite[Figure~6]{Bart22}).

\section{Connectivity Analysis}\label{sec:conn}

We now give a formal definition of a random 
geometric hypergraph and show that under reasonable conditions a thresholding radius of order 
$( \log(n)/n)^{1/d}$ ensures connectivity, asymptotically.

Let $D$ be a bounded Euclidean domain in $\RR^d$, such that $D$ has Lipschitz boundary. Given $n\in \N$, we let $\mathcal P_n$ be a Poisson point process sampled from $D$ with respect to some continuous and bounded distribution $f$, such that $f>0$ everywhere on $D$. 
We use $| \cdot|$ to denote the Euclidean norm.
 Let $n\in \N$, and let $n_1$ be the expeted number of nodes and $n_2$ be the expected number of hyperedges, chosen such that $n=n_1+n_2$. Let $r_n$ be a function of $n$, tending to $0$ as $n\to \infty$.

\begin{defn}
Let $G(n_1,n_2,r_n)$ be the probability space on the set of geometric hypergraphs, where the random nodes are chosen as a Poisson point process $\mathcal P_{n_1}$ in $D$ sampled with respect to $f$, the random hyperedges are induced by another Poisson point process $\mathcal P_{n_2}$ in $D$ sampled with respect to $f$, and where,
using bipartite graph-hypergraph equivalance,
a node $x\in \mathcal P_{n_1}$ and a hyperedge $y\in \mathcal P_{n_2}$ are connected by an edge if $\abs{x-y}<r_{n}$.
\end{defn}

Suppose that the expected number of nodes $n_1$ and of hyperedges $n_2$ satisfy 
\begin{equation*}\label{eq: choice of n_1 and n_2}
  \frac{n_1}{n_2}=\Theta(1).  
\end{equation*}
Equivalently, this means that $n_1$ and $n_2$ as functions of $n$ satisfy
$$
    n_1=\Theta(n),\quad n_2=\Theta(n).
$$
Let $K>0$ be the smallest constant such that for all $n\in \N$,
\begin{equation}\label{eq: conditions on n1 and n2}
n_1\geq \frac{1}{K}n \quad \text{ and } \quad n_2\geq \frac{1}{K}n.
\end{equation}

 Partition $\RR^d$ into a grid of cubes $\{C_{i,n}\}_i$ of width $\gamma r_n$, where $r_n=o(1)$ and $\gamma >0$ is to be determined. Let $S_n:=\{i\ |\ C_{i,n}\subset D\}$, and for each $i\in S_n$, let $\mathcal I(i,n):=\{j\not \in S_n\ |\ C_{j,n}\text{ is adjacent to } C_{i,n}\},$ and let
$$
Q_{i,n}:=\cup_{j\in \mathcal I(i,n)}(C_{j,n}\cap D).
$$

Since $D$ has a Lipschitz boundary, by compactness there exists $C>0$ depending on $D$ and $d$ (but not on $\gamma$), such that we can choose $n_0\in \N$ sufficiently large such that for all $n\geq n_0$ and all $i\in S_n$
$$
\forall\ x,y\in Q_{i,n},\ \abs{x-y}<C \gamma r_n.
$$
We then choose $\gamma:=\frac{1}{C},$ so that 
for all $i\in S_n$,
\begin{equation}\label{ineq: cube max pairwise distance}
\forall\ x,y\in Q_{i,n},\ \abs{x-y}< r_n.
\end{equation}

Note also that we have $\nu(Q_{i,n})\geq \nu(C_{i,n})\geq f_{\min}\gamma^d  r_n^d=\frac{f_{\min}}{C^d}r_n^d,$ where $f_{\min}:=\min\{f(x)\ |\ x\in \Omega\}.$ 
\begin{lemma}[Asymptotic coverage]\label{lemma: existence weak version}
Suppose that $m$ as a function of $n$ satisfies, for all $n\in\N$,
$$
m\geq \frac{1}{K}n,
$$
and suppose that $r_n$ satisfies
\begin{equation}\label{eq: rn weak version}
n\frac{f_{\min}}{KC^d} r_n^d\geq \log n-\log\log n +w(n),
\end{equation}
where $w(n)\to \infty$ arbitrarily slowly as $n\to \infty$.
With probability tending to $1$ as $n\to \infty$, for all $i\in S_n$
$$
\mathcal P_m\cap Q_{i,n}\neq \emptyset.
$$
\end{lemma}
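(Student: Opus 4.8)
The plan is to run a first–moment (union–bound) argument over the cells $Q_{i,n}$, using the defining feature of a Poisson point process: the number of points falling in any fixed region is Poisson distributed, so void probabilities are exactly exponential. First I would note that $\mathcal P_m$ has intensity measure $m\nu$, where $\nu$ is the probability measure with density $f$, so for each fixed $i\in S_n$ the count $\#(\mathcal P_m\cap Q_{i,n})$ is Poisson with mean $m\,\nu(Q_{i,n})$, whence
\[
\PP\bigl(\mathcal P_m\cap Q_{i,n}=\emptyset\bigr)=e^{-m\,\nu(Q_{i,n})}.
\]
Inserting the two available lower bounds, namely $m\geq n/K$ and the cell–mass estimate $\nu(Q_{i,n})\geq \tfrac{f_{\min}}{C^d}r_n^d$ recorded just before the statement, gives $m\,\nu(Q_{i,n})\geq n\tfrac{f_{\min}}{KC^d}r_n^d$, and the radius hypothesis \eqref{eq: rn weak version} then yields the per-cell bound
\[
\PP\bigl(\mathcal P_m\cap Q_{i,n}=\emptyset\bigr)\leq e^{-(\log n-\log\log n+w(n))}=\frac{\log n}{n}\,e^{-w(n)}.
\]

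The quantitative heart of the argument is controlling the number of cells. Since $D$ is bounded and the cubes $\{C_{i,n}\}_{i\in S_n}$ are disjoint with Lebesgue volume $(\gamma r_n)^d$, we have $\abs{S_n}\leq \mathrm{vol}(D)/(\gamma r_n)^d=\mathrm{vol}(D)\,C^d\,r_n^{-d}$. Inverting \eqref{eq: rn weak version} gives $r_n^d\geq \tfrac{KC^d}{n f_{\min}}(\log n-\log\log n+w(n))\geq \tfrac{KC^d}{2 n f_{\min}}\log n$ for $n$ large (as $\log\log n=o(\log n)$), so that $\abs{S_n}=O(n/\log n)$. A union bound over $i\in S_n$ then gives
\[
\PP\bigl(\exists\, i\in S_n:\ \mathcal P_m\cap Q_{i,n}=\emptyset\bigr)\leq \abs{S_n}\cdot\frac{\log n}{n}\,e^{-w(n)}=O\bigl(e^{-w(n)}\bigr),
\]
which tends to $0$ because $w(n)\to\infty$. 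Taking complements proves the claim. Note that the overlaps between neighbouring regions $Q_{i,n}$ are harmless, since the union bound requires no independence.

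The main obstacle is not the probabilistic estimate but the exact calibration of constants in the threshold. The subtracted $-\log\log n$ term is precisely what is needed so that the factor $\log n/n$ in the void probability cancels the $n/\log n$ growth of $\abs{S_n}$, leaving a bound that still vanishes for an \emph{arbitrarily} slowly growing $w(n)$; subtracting any larger multiple of $\log\log n$ would break this for the slowest admissible $w$. I would be careful that the cardinality estimate uses the lower bound on $r_n$ (so the cells are not too small), while the void-probability estimate uses $\nu(Q_{i,n})$ bounded below, and that $\mathrm{vol}(\cdot)$ here denotes Lebesgue measure whereas $\nu$ carries the density $f$; keeping these two notions of measure, and the two directions of the $r_n$ bound, separate is where constant or sign errors are most likely to appear.
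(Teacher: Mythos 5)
Your proposal is correct and follows essentially the same route as the paper's proof: a union bound over the cells $Q_{i,n}$, the exact Poisson void probability $e^{-m\nu(Q_{i,n})}$ combined with the lower bounds $m\geq n/K$ and $\nu(Q_{i,n})\geq f_{\min}r_n^d/C^d$, and the cell-count estimate $\abs{S_n}\lesssim (\gamma r_n)^{-d}\lesssim n(\log n)^{-1}$ obtained from the radius hypothesis. Your write-up is in fact slightly more explicit than the paper's (which invokes the pigeonhole principle for the cell count and does not spell out the inversion of \eqref{eq: rn weak version}), and your observation that $\mathcal P_m$ need not be homogeneous, only that its intensity measure is $m\nu$, quietly corrects a loose phrase in the original.
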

\begin{proof}
It suffices to show that the RHS in
$$
\mathbb P(\exists\ i\in S_n, \mathcal P_m(Q_{i,n})=0)\leq \sum_{i\in S_n}\mathbb P(\mathcal P_n(Q_{i,n})=0)
$$
tends to $0$ as $n\to \infty$.

Since $\mathcal P_m$ is a homogeneous Poisson point process, we have, using (\ref{eq: rn weak version}), for all $i\in S_n$

$$
\mathbb P(\mathcal P_m(Q_{i,n})=0) = \exp(-m \nu(Q_{i,n}))\leq \exp\left(-n\frac{f_{\min}}{KC^d}  r_n^d\right) \leq n^{-1} (\log n) e^{-w(n)},
$$
and by the pigeonhole principle, $\abs{S_n}\lesssim (\gamma r_n)^{-d}\lesssim n (\log n)^{-1}$.
Hence
\begin{align*}
    \mathbb P(\exists\ i\in S_n, \mathcal P_n(Q_{i,n})=0)&\lesssim e^{-w(n)}.
\end{align*}
\end{proof}

Note that with our choice of $\gamma$, we have $$
\{\forall\ i\in S_n\ |\ \mathcal P_m\cap Q_{i,n}\neq \emptyset\}\subset \{D \subset \cup_{x\in \mathcal P_m} B(x,r_n)\}.
$$
Hence Lemma~\ref{lemma: existence weak version} gives us a lower bound estimate on the decay of $r_n$ as a function of $n$, to ensure that the balls centered at the points of $\mathcal P_m$ and of radius $r_n$, tend to form a covering of the domain $D$ as $n\to \infty$. This is an asymptotic result.

From a practical point of view, it is more useful to have a non-asymptotic version of Lemma~\ref{lemma: existence weak version}, even if we must increase slightly the constraint on the decay of $r_n$. This is the object of Lemma~\ref{lemma: existence strong version}.

\begin{lemma}[Non-asymptotic coverage]\label{lemma: existence strong version}
Suppose that $m$ as a function of $n$ satisfies, for all $n\in\N$,
$$
m\geq \frac{1}{K}n,
$$
and suppose this time that $r_n$ satisfies
\begin{equation}\label{eq: rn strong version}
n\frac{f_{\min}}{KC^d} r_n^d\geq 2\log n+\epsilon \log\log n,
\end{equation}
for some fixed $\epsilon>0$, then a.s., there exists $N\in \N$ such that for all $n\geq N$ and all $i\in S_n$
$$
\mathcal P_m\cap Q_{i,n}\neq \emptyset.
$$
\end{lemma}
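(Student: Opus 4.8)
The plan is to upgrade the convergence-in-probability conclusion of Lemma~\ref{lemma: existence weak version} to an almost-sure statement by means of the first Borel--Cantelli lemma. For each $n$, let $A_n$ denote the bad event
$$
A_n := \{\exists\, i\in S_n : \mathcal P_m\cap Q_{i,n}=\emptyset\}.
$$
The target conclusion --- that almost surely $A_n$ fails for every sufficiently large $n$ --- is precisely the complement of $\{A_n \text{ infinitely often}\}$. Hence it suffices to show $\sum_n \mathbb{P}(A_n)<\infty$, after which Borel--Cantelli gives $\mathbb{P}(A_n \text{ i.o.})=0$ and the desired (random) threshold $N$ exists on a set of probability one.

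First I would rerun the per-cube estimate from the proof of Lemma~\ref{lemma: existence weak version}, but now feeding in the stronger hypothesis (\ref{eq: rn strong version}). Exactly as before,
$$
\mathbb{P}(\mathcal P_m(Q_{i,n})=0)=\exp(-m\,\nu(Q_{i,n}))\leq \exp\Bigl(-n\tfrac{f_{\min}}{KC^d}r_n^d\Bigr)\leq \exp\bigl(-(2\log n+\epsilon\log\log n)\bigr)=n^{-2}(\log n)^{-\epsilon}.
$$
Combining this with the pigeonhole bound $\abs{S_n}\lesssim (\gamma r_n)^{-d}\lesssim n(\log n)^{-1}$, valid since $r_n^d\gtrsim \log n / n$ under (\ref{eq: rn strong version}), and applying the union bound yields
$$
\mathbb{P}(A_n)\leq \abs{S_n}\max_{i\in S_n}\mathbb{P}(\mathcal P_m(Q_{i,n})=0)\lesssim \frac{n}{\log n}\cdot n^{-2}(\log n)^{-\epsilon}=\frac{1}{n(\log n)^{1+\epsilon}}.
$$

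The final step is to observe that the Bertrand series $\sum_n n^{-1}(\log n)^{-(1+\epsilon)}$ converges precisely because $\epsilon>0$ (by the integral test), so $\sum_n\mathbb{P}(A_n)<\infty$ and Borel--Cantelli applies. I expect there to be no deep obstacle here --- this is a standard strengthening --- but the genuinely \emph{conceptual} point, as opposed to the routine calculation, is twofold. First, the statement is a joint one across all $n$, so the whole family $\{\mathcal P_m\}_n$ must be regarded as living on a single probability space (as is standard in this setting); no independence between the events $A_n$ is needed, since the first Borel--Cantelli lemma requires only summability. Second, the factor $2$ in front of $\log n$ in (\ref{eq: rn strong version}) is exactly what does the work: a single $\log n$ would only give $\mathbb{P}(A_n)\lesssim (\log n)^{-1}$, which fails to be summable, whereas doubling it absorbs the $\abs{S_n}\sim n/\log n$ factor and the extra $\epsilon\log\log n$ pushes the exponent on $\log n$ strictly past the critical value $1$, leaving a convergent tail.
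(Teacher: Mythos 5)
Your proposal is correct and follows essentially the same route as the paper's own proof: rerun the per-cube exponential estimate under the stronger condition (\ref{eq: rn strong version}) to obtain $\mathbb P(\exists\, i\in S_n,\ \mathcal P_m(Q_{i,n})=0)\lesssim n^{-1}(\log n)^{-(1+\epsilon)}$, then conclude by the first Borel--Cantelli lemma using summability of this Bertrand series. Your write-up is in fact more explicit than the paper's (which only sketches the modification), and your observations about why the factor $2\log n$ and the $\epsilon\log\log n$ term are exactly what is needed are accurate.
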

\begin{proof}
A proof proceeds similarly to that of Lemma~\ref{lemma: existence weak version}, but the different constraint on $r_n$ yields instead
$$
\mathbb P(\exists\ i\in S_n, \mathcal P_m(Q_{i,n})=0)\lesssim \frac{1}{n(\log n)^{1+\epsilon}},
$$
and the required result then follows by the Borel-Cantelli lemma, since then, the series
$$
\sum_{n=0}^N\ \mathbb P(\exists\ i\in S_n, \mathcal P_m(Q_{i,n})=0)
$$
converges as $N\to \infty$.
\end{proof}
We believe that the lower bound condition on the decay of $r_n$ found in Lemma~\ref{lemma: existence weak version} is sharp, and that the lower bound condition in Lemma~\ref{lemma: existence strong version} is close to being sharp.
In Theorem~\ref{thm:conn} we apply Lemmas~\ref{lemma: existence weak version} and \ref{lemma: existence strong version} to obtain a sufficient lower bound condition on $r_n$ for the connectivity of random geometric hypergraphs, with an extra factor of $2$. We suspect that this factor could be reduced by more sophisticated analysis. 
\begin{thm}\label{thm:conn}
For every $n\in\N$, let $(n_1,n_2)\in \N^2$ satisfy (\ref{eq: conditions on n1 and n2}) and $n=n_1+n_2$. 
\begin{itemize}
\item If $r_n$ satisfies (\ref{eq: rn weak version}), then with probability tending to $1$ as $n\to \infty$, the random graph $G(n_1,n_2,2r_n)$ is connected.\\
\item If $r_n$ satisfies (\ref{eq: rn strong version}), then a.s.\ there exists $N\in \N$, such that for all $n\geq N$, the random geometric bipartite graph $G(n_1,n_2,2r_n)$ is connected.
\end{itemize} 
\end{thm}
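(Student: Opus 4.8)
The plan is to deduce connectivity from coverage, applying the coverage lemmas to the node process and the hyperedge-centre process simultaneously. First I would invoke Lemma~\ref{lemma: existence weak version} (respectively Lemma~\ref{lemma: existence strong version}) twice, once with $m=n_1$ and once with $m=n_2$: condition (\ref{eq: conditions on n1 and n2}) guarantees $n_1,n_2\geq n/K$, so both hypotheses are met. Intersecting the two resulting events (the intersection of two events each of probability tending to $1$ still has probability tending to $1$, and the intersection of two almost-sure eventual events is again almost-sure eventual), I obtain that, under the stated guarantees, \emph{both} $\mathcal P_{n_1}$ and $\mathcal P_{n_2}$ meet every $Q_{i,n}$, $i\in S_n$. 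By the containment displayed just after Lemma~\ref{lemma: existence weak version}, this yields the two coverings $D\subset\cup_{x\in\mathcal P_{n_1}}B(x,r_n)$ and $D\subset\cup_{y\in\mathcal P_{n_2}}B(y,r_n)$.

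It then remains to argue, deterministically, that whenever both point sets cover $D$ by open balls of radius $r_n$, the bipartite graph $G(n_1,n_2,2r_n)$ is connected; this is where the factor of $2$ is consumed. The main step is to show that all hyperedge centres lie in a single component. Given centres $y,y'$, I would join them by a path in $D$ (a domain is open and connected, hence path-connected) and cover the compact path image by finitely many balls $B(y_k,r_n)$ with $y_k\in\mathcal P_{n_2}$; by connectedness of the path the overlap graph of this finite subcover is connected, so I may pass to a chain $y=y_0,y_1,\dots,y_M=y'$ in which consecutive balls share a point $p_k\in B(y_k,r_n)\cap B(y_{k+1},r_n)\cap D$ of the path. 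Because $\mathcal P_{n_1}$ covers $D$, there is a node $x_k$ with $\abs{x_k-p_k}<r_n$, and then the two strict triangle inequalities $\abs{x_k-y_k}\leq\abs{x_k-p_k}+\abs{p_k-y_k}<2r_n$ and, symmetrically, $\abs{x_k-y_{k+1}}<2r_n$ show that $y_k\sim x_k\sim y_{k+1}$ in $G(n_1,n_2,2r_n)$. Concatenating these length-two links gives a path from $y$ to $y'$, so the centre set lies in one component.

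To finish, every node $x\in\mathcal P_{n_1}$ is covered by some centre ball, giving a centre $y$ with $\abs{x-y}<r_n<2r_n$ and hence an edge $x\sim y$; thus every node attaches to the single component of centres, and $G(n_1,n_2,2r_n)$ is connected. The two bullets of the theorem then follow from the two modes of convergence supplied by Lemmas~\ref{lemma: existence weak version} and~\ref{lemma: existence strong version} respectively.

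I anticipate the main obstacle to be the deterministic coverage-to-connectivity step rather than the probabilistic input, and within it the careful bookkeeping of strict inequalities: coverage by radius-$r_n$ balls leaves exactly no slack at connection radius $2r_n$, so the argument must route each centre-to-centre link through a node sitting in the \emph{overlap} of two consecutive balls (and symmetrically), which is precisely what makes both triangle inequalities come out strictly below $2r_n$. A secondary technical point, handled by the finite-subcover argument sketched above, is the standard fact that the overlap graph of a finite cover of a connected set is connected; some care is also needed to choose the overlap points $p_k$ inside $D$ so that the covering property of $\mathcal P_{n_1}$ applies to them.
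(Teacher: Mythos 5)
Your proposal is correct, and its probabilistic half coincides exactly with the paper's: both apply Lemma~\ref{lemma: existence weak version} (resp.\ Lemma~\ref{lemma: existence strong version}) with $m=n_1$ and with $m=n_2$, then intersect the two high-probability (resp.\ almost-sure eventual) events. Where you genuinely diverge is the deterministic step. The paper never passes to the ball-coverage formulation: it works directly with the grid event (\ref{eq: ideal eq}), joining two nodes of $\mathcal P_{n_1}$ by a path of adjacent cubes and building an alternating node/centre path, with (\ref{ineq: cube max pairwise distance}) and the triangle inequality bounding each hop by $2r_n$. You instead convert the grid event into the two coverings $D\subset\cup_{x\in\mathcal P_{n_1}}B(x,r_n)$ and $D\subset\cup_{y\in\mathcal P_{n_2}}B(y,r_n)$, and then prove a clean standalone proposition: double coverage at radius $r_n$ implies bipartite connectivity at radius $2r_n$, via a continuous path in $D$, a finite subcover, the chain lemma for connected sets, and routing each centre-to-centre hop through a node sitting in the overlap of consecutive balls. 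The trade-off is instructive. The paper's argument is shorter and stays combinatorial, but it silently assumes that any two points of $D$ can be joined by a chain of adjacent cubes of the grid restricted to $D$ --- a quantitative, scale-$r_n$ connectivity property of the Lipschitz domain that is asserted rather than proved. Your route needs only the qualitative fact that a domain (open and connected in $\RR^d$) is path-connected, which is automatic, at the cost of the topological bookkeeping you correctly identify: overlaps must be witnessed by points of the path (hence inside $D$, so the node coverage applies), all inequalities stay strict, and you should enlarge the finite subcover to include $B(y,r_n)$ and $B(y',r_n)$ so the chain can start and end where you claim (a one-line fix). In short, your version is more modular and, on the deterministic side, more self-contained; the paper's is more economical given the grid machinery it has already set up.
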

\begin{proof}
The result is a consequence of Lemmas~\ref{lemma: existence weak version} and \ref{lemma: existence strong version} and the triangle inequality.

Suppose that $n\in \N$ is such that for all $i\in S_n$
\begin{equation}\label{eq: ideal eq}
\mathcal P_{n_1}\cap Q_{i,n}\neq \emptyset \quad \text{ and }
\quad \mathcal P_{n_2}\cap Q_{i,n}\neq \emptyset.
\end{equation}
Given two points $x,y\in \mathcal P_{n_1}$, we can find a path of adjacent cubes from $\mathcal Q_n$, such that the first cube contains $x$ and the last cube contains $y$. From (\ref{ineq: cube max pairwise distance}) and the triangle inequality, the distance between a point in one cube and another point in an adjacent cube is at most $2r_n$. Since for each cube in the path we can find a point from $\mathcal P_{n_1}$ and a point from $\mathcal P_{n_2}$, we can then form a path of edges of length at most $2r_n$ from $x$ to $y$, alternating between points in $\mathcal P_{n_1}$ and points in $\mathcal P_{n_2}$, and such a path is then a path in $\mathcal G(n_1,n_2,2r_n).$ 

This shows connectivity of the graph for all $n$ satisfying condition (\ref{eq: ideal eq}). 

This condition is true with probability tending to $1$ as $n\to \infty$, if we assume that $r_n$ satisfies (\ref{eq: rn weak version}), using Lemma~\ref{lemma: existence weak version} with $n_1$ and $n_2$ instead of $m$, giving us the first part of the theorem.

Using Lemma~\ref{lemma: existence strong version} with $n_1$ and $n_2$ instead of $m$, if $r_n$ satisfies (\ref{eq: rn strong version}), there exists $N\in \N$ such that (\ref{eq: ideal eq}) is true for all $n\geq N$, giving us the second 
part of the theorem.
\end{proof}
\section{Discussion}\label{sec:disc}

There are a number of promising avenues for further work in this area.
From a theoretical perspective, it would be of interest to derive  
useful upper bounds or indeed sharp expressions for the exact connectivity radius threshold
associated with this class of random geometric hypergraphs.
More general hypergraph models could also be developed and studied, for example using a softer version 
of the distance cut-off that has been considered in the graph setting \cite{Dett18,P16}.

From a more practical viewpoint, the related inverse problem is both challenging and potentially useful: given
a data set that corresponds to a hypergraph, 
for the model considered here 
what is the best choice of 
(a) embedding dimension, (b) node locations, and (c) hypergraph centre locations?
A similar question was addressed in \cite{Gong23} for a different generative random hypergraph model based on the assumption that  
nodes are located in a latent space and hyperedges arise preferentially between nearby nodes (without the concept of hyperedge centres).
This challenge also leads into the model selection question: 
given a data set and selection of hypergraph models, which one best describes the data, and what insights arise?

\bigskip
\noindent
\textbf{Acknowledgement}
Both authors were supported by Engineering and Physical Sciences Research Council grant EP/P020720/1. The second author was also supported by Engineering and Physical Sciences Research Council grant EP/W011093/1.

\bibliographystyle{plain}
\bibliography{references}

\end{document}